\def\Z{\mathbb Z}
\def\N{\mathbb N}
\def\Q{\mathbb Q}
\def\R{\mathbb R}
\def\cS{\mathcal S}
\def\ord{{\mathrm{ord}}}
\def\1{{\bf 1}}
\def\pmod #1{\ ({\rm{mod}}\ #1)}
\def\qbinom #1#2#3{{\genfrac{[}{]}{0pt}{}{#1}{#2}}_{#3}}
\theoremstyle{plain}
\newtheorem{theorem}{Theorem}
\newtheorem{lemma}{Lemma}
\newtheorem{corollary}{Corollary}
\theoremstyle{definition}
\theoremstyle{remark}
\begin{document}

\title{Divisibility of some binomial sums}

\begin{abstract}
With help of $q$-congruence, we prove the divisibility of some binomial sums. For example, for any integers $\rho,n\geq 2$,
$$
\sum_{k=0}^{n-1}(4k+1)\binom{2k}{k}^\rho\cdot(-4)^{\rho(n-1-k)}\equiv 0\pmod{2^{\rho-2}n\binom{2n}{n}}.
$$
\end{abstract}
\author{He-Xia Ni}
\address{Department of Mathematics, Nanjing University, Nanjing 210093,
People's Republic of China}
\email{nihexia@yeah.net}
\author{Hao Pan}
\address{Department of Mathematics, Nanjing University, Nanjing 210093,
People's Republic of China}
\email{haopan79@zoho.com}
\keywords{congruence; $q$-binomial coefficient}

\subjclass[2010]{Primary 11B65; Secondary 05A10, 05A30, 11A07}
\maketitle
\section{Introduction}
\setcounter{lemma}{0}
\setcounter{theorem}{0}
\setcounter{corollary}{0}
\setcounter{remark}{0}
\setcounter{equation}{0}
\setcounter{conjecture}{0}

In \cite{Ra14}, Ramanujan listed 17 curious convergent series concerning $1/\pi$. For example, Ramanujan found that
\begin{equation}
\sum_{k=0}^\infty\frac{6k+1}{256^k}\cdot\binom{2k}{k}^3=\frac{4}\pi.
\end{equation}
Nowadays, the theory of Ramanujan-type series has been greatly developed.  In \cite{Gu}, Guillera gave a summary for the methods to deal with Ramanujan-type series.

In the recent years, the arithmetic properties of truncated Ramanujan-type series also be investigated. In \cite{Ha96}, van Hamme proposed 13 conjectured congruences concerning truncated Ramanujan-type series. For example,
\begin{equation}
\sum_{k=0}^{\frac{p-1}{2}}\frac{6k+1}{256^k}\cdot\binom{2k}{k}^3\equiv (-1)^{\frac{p-1}{2}}p\pmod{p^4},
\end{equation}
where $p>3$ is a prime. Now all conjectures of van Hamme have been confirmed. The reader may refer to \cite{Sw15,OZ16} for the history of the proofs of van Hamme's conjectures.

On the other hand, Sun \cite{Su} discovered that the convergent series concerning $\pi$ often corresponds to the divisibility of some binomial sums. For example, Sun conjectured that for each integer $n\geq 2$
\begin{equation}\label{sunconj}
\sum_{k=0}^{n-1}(5k+1)\binom{2k}{k}^2\binom{3k}{k}\cdot(-192)^{n-1-k}\equiv0\pmod{n\binom{2n}{n}},
\end{equation}
which corresponds to the identity of Ramanujan
\begin{equation}
\sum_{k=0}^\infty\frac{5k+1}{(-192)^k}\cdot\binom{2k}{k}^2\binom{3k}{k}=\frac{4\sqrt{3}}{\pi}.
\end{equation}

In this paper, we shall consider the divisibility of some binomial sums similar as (\ref{sunconj}).
For $\alpha\in\Q\setminus\Z$ and $n\in\Z^+$, define
$$N_{\alpha,n}:=\text{ the numerator of }n\cdot\bigg|\binom{-\alpha}n\bigg|.$$
It is easy to see that $N_{\frac12,n}$ coincides with the odd part of $n\binom{2n}{n}$.
\begin{theorem} \label{binomsumrho}
Suppose that $\rho$ is a positive integer and $\alpha$ is a non-integral rational number.
Then for each integer $n\geq 1$, 
\begin{equation}\label{binomsumr}
\sum_{k=0}^{n-1}(2k+\alpha)\binom{-\alpha}{k}^\rho\equiv 0\pmod{N_{\alpha,n}}.
\end{equation}
\end{theorem}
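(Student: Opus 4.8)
The plan is to establish a polynomial $q$-congruence refining \eqref{binomsumr} and then let $q\to1$. The case $\rho=1$ is exact and guides everything else: a one-line induction on $n$, using the recurrence $(k+1)\binom{-\alpha}{k+1}=-(k+\alpha)\binom{-\alpha}{k}$, yields
$$\sum_{k=0}^{n-1}(2k+\alpha)\binom{-\alpha}{k}=-n\binom{-\alpha}{n}.$$
Since the numerator of $-n\binom{-\alpha}{n}$ is by definition $N_{\alpha,n}$, this settles $\rho=1$ with equality of valuations, and it is the seed telescoping to be $q$-deformed.

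For $\rho\ge2$ I would move to a $q$-deformation. Writing $\alpha=a/d$ in lowest terms, put $[m]=(1-q^{m})/(1-q)$ and let $\binom{-\alpha}{k}_{q}=\prod_{j=1}^{k}\frac{1-q^{-\alpha-j+1}}{1-q^{j}}$, which tends to $\binom{-\alpha}{k}$ as $q\to1$ and becomes a Laurent polynomial in $q^{1/d}$ after a suitable normalisation. The goal is the $q$-congruence
$$\sum_{k=0}^{n-1}q^{c_{k}}[2k+\alpha]\binom{-\alpha}{k}_{q}^{\rho}\equiv 0\pmod{\text{numerator of }[n]\binom{-\alpha}{n}_{q}},$$
with normalising exponents $c_{k}$ making the left side a genuine polynomial. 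At $q=1$ the modulus becomes $n\binom{-\alpha}{n}$, whose numerator is $N_{\alpha,n}$, so this specialises to \eqref{binomsumr}; the advantage is that in $\Z[q^{1/d}]$ the modulus factors into cyclotomic polynomials, and divisibility by those is checkable factor-by-factor and tracks every prime simultaneously.

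To prove the $q$-congruence I would use creative microscoping in the style of Guo--Zudilin. First replace one copy of $\binom{-\alpha}{k}_{q}$ by $\binom{x}{k}_{q}$ with a free parameter $x$ and prove the cleaner, \emph{squarefree} congruence
$$\sum_{k=0}^{n-1}q^{c_{k}}[2k+\alpha]\binom{-\alpha}{k}_{q}^{\rho-1}\binom{x}{k}_{q}\equiv 0\pmod{\prod_{m}\Phi_{m}(q^{1/d})}$$
modulo a product of distinct cyclotomic factors; the natural tool here is the $q$-analogue of the first-step telescoping combined with the $q$-Lucas periodicity of the binomials modulo each $\Phi_{m}$, which makes the weighted sum collapse onto a boundary term proportional to $\binom{-\alpha}{n}_{q}$. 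Then I would specialise $q^{x}\to q^{-\alpha}$, so that the factor carried by $x$ coalesces with one already present in the modulus and the required higher powers of the $\Phi_{m}$ are produced automatically.

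The main obstacle is exactly this higher-power bookkeeping: $N_{\alpha,n}$ can be divisible by $p^{t}$ with $t\ge2$ (already for $\alpha=\tfrac12$, e.g. $9\mid N_{1/2,5}$), so a single cyclotomic factor must be recovered to the correct multiplicity after specialisation. Making this work forces one to pin down both the auxiliary factor carried by $x$ and the exponents $c_{k}$ so that the one-parameter congruence is simultaneously true and sharp enough to coalesce to the prescribed power. Once that squarefree, $x$-deformed congruence is proved, the coalescence and the limit $q\to1$ are routine, and matching cyclotomic factors with prime powers (as in the identification of $N_{1/2,n}$ with the odd part of $n\binom{2n}{n}$) extracts the divisor $N_{\alpha,n}$.
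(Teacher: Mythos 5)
Your $\rho=1$ telescoping is correct and in fact gives the exact identity $\sum_{k=0}^{n-1}(2k+\alpha)\binom{-\alpha}{k}=-n\binom{-\alpha}{n}$, but for $\rho\ge 2$ the proposal has a genuine gap at its core step. You propose that the $q$-sum ``collapses onto a boundary term proportional to $\binom{-\alpha}{n}_q$'' via first-step telescoping plus $q$-Lucas periodicity. Telescoping is special to $\rho=1$; for $\rho\ge2$ the sum does not telescope and there is no boundary term of that shape. What the paper does instead --- and what your sketch is missing --- is a reflection symmetry: modulo $\Phi_d(q)$ one replaces $(q^r;q^m)_k/(q^m;q^m)_k$ by $\pm q^{\ast}\qbinom{h}{k}{q^m}$ with $h=\lambda_{r,m}(d)$, and then the full-period sum $\sum_{k=0}^{d-1}q^{mk}[2mk-hm]_q\,\qbinom{h}{k}{q^m}^{\rho}$ is carried to its own negative by the substitution $k\mapsto h-k$ (using $\qbinom{h}{k}{q^m}=\qbinom{h}{h-k}{q^m}$ together with the antisymmetry of the weight $q^{mk}[2mk-hm]_q$), hence vanishes identically mod $\Phi_d(q)$ for \emph{every} $\rho$. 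Combined with a $q$-Lucas-type factorization of $(q^r;q^m)_{sd+t}/(q^m;q^m)_{sd+t}$, this yields divisibility of the partial sum by each relevant $\Phi_d(q)$. Without this symmetry (or some substitute) your plan has no mechanism for proving that the period sum vanishes when $\rho\ge2$.

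Second, the ``main obstacle'' you identify --- recovering $p^{t}$ with $t\ge2$ by coalescing cyclotomic factors via a Guo--Zudilin parameter $x$ --- is a misdiagnosis. In the factorization of $N_{\alpha,n}$ into cyclotomic values, a prime power $p^{t}$ arises from $t$ \emph{distinct} irreducible factors $\Phi_{p}(q),\Phi_{p^2}(q),\dots,\Phi_{p^t}(q)$, each contributing $\Phi_{p^j}(1)=p$ exactly once (e.g.\ $9\mid N_{1/2,5}$ because both $3$ and $9$ lie in $\cS_{1,2}(5)$). So the squarefree congruence modulo $\prod_{d}\Phi_d(q)$ over the correct index set already has the full strength needed, and no coalescence is required; conversely, introducing the parameter $x$ does not substitute for determining that index set and proving the congruence for each $d$ in it, which your sketch does not do. There is also an unaddressed integrality step at $q\to1$: one must clear the non-cyclotomic denominators and check, via Gauss's lemma, that they contribute only primes dividing the denominator of $\alpha$, which the paper handles with the auxiliary polynomial $B_{r,m,n}(q)$.
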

In particular, substituting $\alpha=1/2$ in (\ref{binomsumr}), we may obtain that
\begin{corollary} \label{2kkrho}
Suppose that $\rho\geq 2$ is  an integer. Then for each integer $n\geq 2$,
\begin{equation}\label{2kkrhon2nn}
\sum_{k=0}^{n-1}(4k+1)\binom{2k}{k}^\rho\cdot(-4)^{\rho(n-1-k)}\equiv 0\pmod{2^{\rho-2}n\binom{2n}{n}}.
\end{equation}
\end{corollary}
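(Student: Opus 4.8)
The plan is to specialize Theorem \ref{binomsumrho} at $\alpha=1/2$ and then account separately for the power of $2$ that the corollary's modulus carries beyond the odd modulus $N_{1/2,n}$. First I would record the elementary identity
\begin{equation*}
\binom{-1/2}{k}=\frac{(-1)^k}{4^k}\binom{2k}{k},
\end{equation*}
so that $\binom{-1/2}{k}^\rho=(-4)^{-\rho k}\binom{2k}{k}^\rho$ and $2k+\tfrac12=(4k+1)/2$. Writing $T:=\sum_{k=0}^{n-1}(2k+\tfrac12)\binom{-1/2}{k}^\rho$ for the left-hand side of (\ref{binomsumr}) at $\alpha=1/2$, a direct manipulation gives
\begin{equation*}
S:=\sum_{k=0}^{n-1}(4k+1)\binom{2k}{k}^\rho(-4)^{\rho(n-1-k)}=2\,(-4)^{\rho(n-1)}\,T=\pm\,2^{2\rho(n-1)+1}\,T .
\end{equation*}
Here $S$ is manifestly an integer (a sum of integers times nonnegative powers of $-4$), while $T$ is a rational number differing from $S$ only by a power of $2$.

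Next I would treat the odd and even parts of the modulus $2^{\rho-2}n\binom{2n}{n}$ separately. For an odd prime $p$, multiplication by a power of $2$ leaves the $p$-adic valuation unchanged, so $\ord_p(S)=\ord_p(T)\geq\ord_p(N_{1/2,n})$ by Theorem \ref{binomsumrho}. Since $N_{1/2,n}$ is the odd part of $n\binom{2n}{n}$, we have $\ord_p(N_{1/2,n})=\ord_p\big(n\binom{2n}{n}\big)=\ord_p\big(2^{\rho-2}n\binom{2n}{n}\big)$ for every odd $p$. Hence $S$ is divisible by the odd part of the claimed modulus, and this step is essentially automatic from the theorem.

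The remaining, and main, obstacle is the $2$-adic estimate, since Theorem \ref{binomsumrho} supplies only the odd modulus $N_{1/2,n}$ and says nothing about the factor $2^{\rho-2}$. Let $s(m)$ denote the number of $1$'s in the binary expansion of $m$, so that Kummer's theorem gives $\ord_2\binom{2k}{k}=s(k)$ and $\ord_2\big(n\binom{2n}{n}\big)=\ord_2(n)+s(n)$. The $k$-th summand of $S$ has $2$-adic valuation $\rho\,s(k)+2\rho(n-1-k)$, and since $s(k+1)-s(k)\leq 1$ this valuation is \emph{strictly decreasing} in $k$; therefore $k=n-1$ is the unique minimizer and there can be no cancellation, whence
\begin{equation*}
\ord_2(S)=\rho\,s(n-1).
\end{equation*}
It remains to verify $\rho\,s(n-1)\geq(\rho-2)+\ord_2(n)+s(n)$. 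Using the identity $s(n-1)=s(n)+\ord_2(n)-1$, obtained by inspecting how subtracting $1$ acts on the trailing zeros of $n$, this reduces to $(\rho-1)\big(s(n)+\ord_2(n)-2\big)\geq 0$, which holds because $\rho\geq 2$ and, for $n\geq 2$, one always has $s(n)+\ord_2(n)\geq 2$ (if $s(n)=1$ then $n$ is a power of $2$, forcing $\ord_2(n)\geq 1$). Combining the odd-prime and $2$-adic estimates gives $S\equiv 0\pmod{2^{\rho-2}n\binom{2n}{n}}$, as claimed. The only genuinely new computation beyond Theorem \ref{binomsumrho} is this $2$-adic monotonicity argument, so I expect that to be where the care is needed.
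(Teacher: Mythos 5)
Your proposal is correct, and its overall skeleton is the same as the paper's: specialize Theorem \ref{binomsumrho} at $\alpha=1/2$, observe that the resulting congruence modulo the odd number $N_{\frac12,n}$ transfers to the sum $S$ because $S$ and $T$ differ only by a power of $2$, and then supply the missing power of $2$ by a separate $2$-adic estimate. Where you genuinely diverge is in how that $2$-adic estimate is carried out. The paper works term by term with elementary lower bounds: from the identity $n\binom{2n}{n}=\binom{2k}{k}\cdot\frac{2^{n-k}(2n-1)\cdots(2k+1)}{(n-1)\cdots(k+1)}$ it gets $\ord_2\bigl(n\binom{2n}{n}\bigr)\leq n-k+\ord_2\binom{2k}{k}$, and combines this with the fact that $\binom{2k}{k}$ is even for $k\geq1$ to show every single summand of $S$ already has $2$-adic order at least $(\rho-2)+\ord_2\bigl(n\binom{2n}{n}\bigr)$. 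You instead invoke Kummer's theorem to write the exact valuation $\rho\,s(k)+2\rho(n-1-k)$ of the $k$-th summand, note it is strictly decreasing in $k$ (so the ultrametric inequality is an equality), and deduce the \emph{exact} order $\ord_2(S)=\rho\,s(n-1)$, which you then compare with $(\rho-2)+\ord_2(n)+s(n)$ via $s(n-1)=s(n)+\ord_2(n)-1$. Your route needs slightly heavier tools (Kummer, digit-sum identities) but buys a sharper conclusion — the precise $2$-adic valuation of the sum rather than a term-by-term bound — while the paper's argument is more elementary and entirely self-contained. Both are complete and correct; in particular your monotonicity and digit-sum manipulations check out, including the edge cases $k=0$ and $n$ a power of $2$.
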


\section{$q$-congruence}
\setcounter{lemma}{0}
\setcounter{theorem}{0}
\setcounter{corollary}{0}
\setcounter{remark}{0}
\setcounter{equation}{0}
\setcounter{conjecture}{0}

First, let us introduce the notion of $q$-congruence. For any $x\in\Q$, define
$$
[x]_q:=\frac{1-q^x}{1-q}.
$$
Clearly if $n\in\N=\{0,1,2,\ldots\}$, then $[n]_q=1+q+\cdots+q^{n-1}$ is a polynomial in $q$. For $a,b\in\N$ and $n\in\Z^+$, if $a\equiv b=\pmod{n}$, then letting $m=(a-b)/n$,
$$
[a]_q-[b]_q=\frac{q^b-q^{a}}{1-q}=q^b\cdot \frac{1-q^{nm}}{1-q}=q^b[m]_{q^n}\cdot [n]_q\equiv 0\pmod{[n]_q},
$$
where the above congruence is considered over the polynomial ring $\Z[q]$. Furthermore, we also have
\begin{equation}\label{nmqnq}
\frac{[nm]_q}{[n]_q}=\frac{1-q^{nm}}{1-q^n}=1+q^n+q^{2n}+\cdots+q^{(m-1)n}\equiv 1+1+\cdots+1=m\pmod{[n]_q}.
\end{equation}
Note that (\ref{nmqnq}) is still valid when $m$ is a negative integer, since $[nm]_q=-q^{nm}[-nm]_q$.

For $d\geq 2$, let $\Phi_d(q)$ denote the $d$-th cyclotomic polynomial, i.e.,
$$
\Phi_d(q)=\prod_{\substack{1\leq k\leq d\\ (d,k)=1}}(q-e^{2\pi\sqrt{-1}\cdot\frac{k}{d}}).
$$
It is well-known that $\Phi_d(q)$ is an irreducible polynomial with integral coefficients. Also, we have
$$
[n]_q=\prod_{\substack{d\geq 2\\ d\mid n}}\Phi_d(q).
$$
So $\Phi_d(q)$ divides $[n]_q$ if and only if $d$ divides $n$. Furthermore,
\begin{equation}\label{Phid1}
\Phi_d(1)=\begin{cases}p,&\text{if }d=p^k\text{ for some prime }p,\\
1,&\text{otherwise}.
\end{cases}
\end{equation}

For $n\in\N$, define
$$
(x;q)_n:=\begin{cases}(1-x)(1-xq)\cdots(1-xq^{n-1}),&\text{if }n\geq 1,\\
1,&\text{if }n=0.
\end{cases}
$$
Also, define the $q$-binomial coefficient
$$
\qbinom{x}{n}q:=\frac{(q^{x-n+1};q)_n}{(q;q)_n}.
$$
Clearly
$$
\lim_{q\to 1}\qbinom{x}n{q}=\binom xn.
$$
Furthermore, it is easy to see that
$$
\qbinom{-\frac rm}{n}{q^m}=(-1)^nq^{-nr-m\binom{n}{2}}\cdot \frac{(q^r;q^m)_n}{(q^m;q^m)_n}.
$$

Suppose that $r\in\Z$, $m\in\Z^+$ and $(r,m)=1$. For each positive integer $d$ with $(d,m)=1$, let $\lambda_{r,m}(d)$ be the integer lying in $\{0,1,\ldots,d-1\}$ such that 
\begin{equation}\label{lambdarmd}
r+\lambda_{r,m}(d)m\equiv 0\pmod{d}.
\end{equation}
 Let 
$$
\cS_{r,m}(n)=\bigg\{d\geq 2:\,\bigg\lfloor\frac{n-1-\lambda_{r,m}(d)}{d}\bigg\rfloor=\bigg\lfloor\frac{n}{d}\bigg\rfloor\bigg\}.
$$
Evidently for each $d>\max_{0\leq j\leq n-1}|r+jm|$, we must have $\lambda_{r,m}(d)>n-1$, whence $d\not\in\cS_{r,m}(n)$. So $\cS_{r,m}(n)$ is always a finite set. Let
\begin{equation}
A_{r,m,n}(q)=\prod_{d\in\cS_{r,m}(n)}\Phi_d(q)
\end{equation}
and
\begin{equation}
C_{m,n}(q)=\prod_{\substack{d\mid n\\ (d,m)=1}}\Phi_d(q).
\end{equation}
Clearly, if $d\mid n$, then we can't have $d\in\cS_{r,m}(n)$. So $A_{r,m,n}(q)$ and $C_{m,n}(q)$ are co-prime. Furthermore, as we shall see in the next section,
\begin{equation}\label{Armn1Cmn1}
A_{r,m,n}(1)C_{m,n}(1)=N_{\frac rm,n}.
\end{equation}

The following theorem is the key ingredient of this paper.
\begin{theorem}\label{main}
Suppose that Let $r\in\Z$ and $m\in\Z^+$. Assume that $\mu_0(q),\mu_1(q),\cdots$ is a sequence of rational functions in $q$ such that for any $d\in\Z^+$ with $(m,d)=1$,

\medskip\noindent (i) $\nu_k(q)$ is $\Phi_d(q)$-integral for each $k\geq 0$, i.e., the denominator of $\nu_k(q)$ is not divisible by $\Phi(q)$;

\medskip\noindent (ii) for any $s,t\in\N$ with $0\leq t\leq d-1$,
$$
\nu_{sd+t}(q)\equiv \mu_s(q)\nu_{t}(q)\pmod{\Phi_d(q)},
$$
where $\mu_s(q)$ is a $\Phi_d(q)$-integral rational function only depending on $s$;

\medskip\noindent (iii) 
$$
\sum_{k=0}^{d-1}\frac{(q^r;q^m)_k}{(q^m;q^m)_k}\cdot\nu_k(q)\equiv0\pmod{\Phi_d(q)}.
$$

\medskip\noindent 
Then
\begin{equation}\label{qrqmqmqmnuk}
\sum_{k=0}^{n-1}\frac{(q^r;q^m)_k}{(q^m;q^m)_k}\cdot\nu_k(q)\equiv0\pmod{A_{r,m,n}(q)C_{m,n}(q)}.
\end{equation}
\end{theorem}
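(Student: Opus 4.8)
The plan is to reduce divisibility by the product $A_{r,m,n}(q)C_{m,n}(q)$ to divisibility by each of its cyclotomic factors. Write $f_k(q)=(q^r;q^m)_k/(q^m;q^m)_k$ and $T_n(q)=\sum_{k=0}^{n-1}f_k(q)\nu_k(q)$. Since $A_{r,m,n}(q)$ and $C_{m,n}(q)$ are products of pairwise distinct, hence pairwise coprime, cyclotomic polynomials $\Phi_d(q)$, with $d$ running over $\cS_{r,m}(n)$ and over the divisors $d\mid n$ with $(d,m)=1$, it suffices to prove $\Phi_d(q)\mid T_n(q)$ for each such $d$. I fix one; note $(d,m)=1$, so $\lambda:=\lambda_{r,m}(d)$ is defined, and I work throughout modulo $\Phi_d(q)$, i.e.\ at a primitive $d$-th root of unity $\zeta$. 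A preliminary count shows each $f_k(q)$ is in fact $\Phi_d(q)$-integral, so every congruence below takes place in the ring of $\Phi_d(q)$-integral rational functions.

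The technical heart is a multiplicativity property of $f_k$ modulo $\Phi_d(q)$. First I would record the vanishing pattern: since $\Phi_d(q)\mid 1-q^{r+mj}$ exactly when $j\equiv\lambda\pmod d$, and $\Phi_d(q)\mid 1-q^{mj}$ exactly when $d\mid j$ (using $(d,m)=1$), counting the vanishing factors in numerator and denominator gives that $f_t(q)\equiv 0\pmod{\Phi_d(q)}$ for $\lambda<t\le d-1$, while $f_t(q)$ is a $\Phi_d(q)$-unit for $0\le t\le\lambda$. I would then prove the key lemma: for all $s\ge 0$ and $0\le t\le d-1$,
\[
f_{sd+t}(q)\equiv U_s\,f_t(q)\pmod{\Phi_d(q)},
\]
where $U_s$ is a $\Phi_d(q)$-integral unit depending only on $s$. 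For $0\le t\le\lambda$ both sides are units, and I would factor $f_{sd+t}/f_t$ into a non-vanishing part and a vanishing part. In the non-vanishing part each residue $b\ne\lambda$ occurs exactly $s$ times among the $sd$ relevant indices, so it reduces modulo $\Phi_d(q)$ to the $t$-independent power $\big(\prod_{b\ne\lambda}(1-q^{r+mb})/\prod_{b=1}^{d-1}(1-q^{mb})\big)^{s}$; the vanishing part is the ratio of the $s$ zero factors of the numerator to the $s$ zero factors of the denominator, and its value at $\zeta$ I would evaluate using $\lim_{q\to\zeta}(1-q^{c})/(1-q^{c'})=c/c'$ for $d\mid c,c'$, again obtaining a quantity depending only on $s$. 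For $\lambda<t\le d-1$ both sides vanish and the lemma reads $0\equiv U_s\cdot 0$. Establishing the $t$-independence of both parts is the main obstacle.

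With the lemma in hand the conclusion follows by grouping $T_n$ into blocks $k=sd+t$. For each complete block $s=0,\dots,\lfloor n/d\rfloor-1$ I combine the lemma with hypothesis (ii), $\nu_{sd+t}\equiv\mu_s\nu_t$, all factors being $\Phi_d(q)$-integral, to obtain
\[
\sum_{t=0}^{d-1}f_{sd+t}\nu_{sd+t}\equiv U_s\mu_s\sum_{t=0}^{d-1}f_t\nu_t\equiv 0\pmod{\Phi_d(q)}
\]
by hypothesis (iii). It remains to treat the final, possibly incomplete, block, and this is where the two families of $d$ enter. If $d\mid n$ there is no incomplete block and we are done. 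If $d\in\cS_{r,m}(n)$, I would unwind the defining condition $\lfloor(n-1-\lambda)/d\rfloor=\lfloor n/d\rfloor$ to the statement $\rho>\lambda$, where $\rho:=n\bmod d$; then the last block runs over $0\le t\le\rho-1$ with $\rho-1\ge\lambda$, and since $f_t\equiv 0$ for $t>\lambda$ the tail $\sum_{t=\rho}^{d-1}f_t\nu_t$ vanishes, so $\sum_{t=0}^{\rho-1}f_t\nu_t\equiv\sum_{t=0}^{d-1}f_t\nu_t\equiv 0$ by (iii). Applying the lemma and (ii) once more makes the incomplete block vanish as well. Hence $\Phi_d(q)\mid T_n(q)$ in all cases, and coprimality of the cyclotomic factors yields (\ref{qrqmqmqmnuk}).
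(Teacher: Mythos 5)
Your proposal is correct and follows essentially the same route as the paper: reduce to divisibility by each $\Phi_d(q)$ separately, establish the $q$-Lucas-type congruence $f_{sd+t}(q)\equiv U_s\,f_t(q)\pmod{\Phi_d(q)}$ (this is exactly the paper's Lemma on $(q^r;q^m)_{sd+t}/(q^m;q^m)_{sd+t}$, proved there by the same vanishing/non-vanishing factor count and the evaluation $(q;q)_{d-1}\equiv d$), and then kill each complete block via hypotheses (ii) and (iii). Your treatment of the incomplete block when $d\in\cS_{r,m}(n)$ --- unwinding the floor condition to $n\bmod d>\lambda$ and using $f_t\equiv 0$ for $t>\lambda$ to complete the partial sum to a full period --- is precisely the paper's observation that $v\geq 1+h$ lets one extend the sum to $ud+d-1$ for free.
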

Before we give the proof of Theorem \ref{main}, which will occupy the subsequence section, let us see an immediate consequence of Theorem \ref{main}.  
\begin{corollary}\label{mainq1} Under the Proposition , additionally assume that for each positive integer $n$, there exists a polynomial $B_n(q)$ with integral coefficients such that

\medskip\noindent (i) 
$$
B_n(q)\sum_{k=0}^{n-1}\frac{(q^r;q^d)_k}{(q^d;q^d)_k}\cdot\nu_k(q)
$$
is a polynomial with integral coefficients.

\medskip\noindent (ii)  $B_n(1)$ is not divisible by any prime $p$ with $p\nmid m$;

\medskip\noindent 
Then for any $n\geq 1$, we have
\begin{equation}\label{binomrmknuk}
\sum_{k=0}^{n-1}(-1)^k\binom{-\frac  rm}{k}\cdot\nu_k(1)\equiv 0\pmod{N_{\frac rm,n}}.
\end{equation}
\end{corollary}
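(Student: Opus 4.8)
The plan is to pass from the polynomial $q$-congruence furnished by Theorem \ref{main} to the numerical congruence (\ref{binomrmknuk}) by specializing $q\to 1$, using $B_n(q)$ only to clear denominators. Write
$$
S(q):=\sum_{k=0}^{n-1}\frac{(q^r;q^m)_k}{(q^m;q^m)_k}\cdot\nu_k(q),
$$
so that Theorem \ref{main} gives $S(q)\equiv 0\pmod{A_{r,m,n}(q)C_{m,n}(q)}$. Concretely, for every $d$ whose cyclotomic factor $\Phi_d(q)$ divides $A_{r,m,n}(q)C_{m,n}(q)$, the rational function $S(q)$ is $\Phi_d(q)$-integral and $\Phi_d(q)$ divides its numerator, so that $S(\zeta)=0$ for every primitive $d$-th root of unity $\zeta$.

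First I would set $P(q):=B_n(q)S(q)$, which by hypothesis (i) lies in $\Z[q]$. Evaluating at a primitive $d$-th root of unity $\zeta$ as above gives $P(\zeta)=B_n(\zeta)S(\zeta)=0$ for each $d$ with $\Phi_d(q)\mid A_{r,m,n}(q)C_{m,n}(q)$. Since the $\Phi_d(q)$ occurring in $A_{r,m,n}(q)$ and $C_{m,n}(q)$ are distinct monic irreducible polynomials (recall that $A_{r,m,n}(q)$ and $C_{m,n}(q)$ are coprime), their product divides $P(q)$ in $\Q[q]$, hence in $\Z[q]$ by division by the monic divisor. Thus we may write
$$
B_n(q)S(q)=A_{r,m,n}(q)C_{m,n}(q)\,Q(q)
$$
for some $Q(q)\in\Z[q]$.

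Next I would specialize $q\to 1$. The identity $\qbinom{-r/m}{k}{q^m}=(-1)^kq^{-kr-m\binom k2}(q^r;q^m)_k/(q^m;q^m)_k$ shows that each summand of $S(q)$ extends continuously to $q=1$ with value $(-1)^k\binom{-r/m}{k}\nu_k(1)$, so that $S(1)$ is exactly the left-hand side of (\ref{binomrmknuk}). Evaluating the displayed polynomial identity at $q=1$ and invoking (\ref{Armn1Cmn1}) then yields
$$
B_n(1)\sum_{k=0}^{n-1}(-1)^k\binom{-r/m}{k}\nu_k(1)=N_{\frac rm,n}\cdot Q(1),
$$
so $N_{\frac rm,n}$ divides $B_n(1)$ times the sum in question.

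Finally I would strip off the factor $B_n(1)$ by a coprimality argument, which is the genuinely arithmetic step and the main point requiring care. Since $(r,m)=1$, every factor $r+jm$ is coprime to $m$, and from $n\binom{-r/m}{n}=\frac{(-1)^n}{(n-1)!\,m^n}\prod_{j=0}^{n-1}(r+jm)$ it follows that its numerator $N_{\frac rm,n}$ is coprime to $m$; on the other hand, hypothesis (ii) forces every prime factor of $B_n(1)$ to divide $m$ (in particular $B_n(1)\neq0$). Hence $\gcd(N_{\frac rm,n},B_n(1))=1$, and cancelling $B_n(1)$ in the displayed divisibility yields (\ref{binomrmknuk}). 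The only delicate point is the transfer in the second step: one must make sure that the hypotheses of Theorem \ref{main} really render $S(q)$ a $\Phi_d(q)$-integral rational function vanishing at the relevant roots of unity, so that multiplying by the possibly non-coprime polynomial $B_n(q)$ preserves the cyclotomic divisibility and the subsequent cancellation of $B_n(1)$ is legitimate.
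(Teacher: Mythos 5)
Your proposal is correct and follows essentially the same route as the paper: multiply the sum by $B_n(q)$ to land in $\Z[q]$, deduce from Theorem \ref{main} that $A_{r,m,n}(q)C_{m,n}(q)$ divides this polynomial over $\Z$ (the paper cites Gauss's lemma on the content of $A_{r,m,n}(q)C_{m,n}(q)$, you use vanishing at primitive roots of unity plus division by a monic polynomial --- the same fact), then set $q=1$, invoke (\ref{Armn1Cmn1}), and cancel $B_n(1)$ by coprimality with $N_{\frac rm,n}$. Your explicit justification that $N_{\frac rm,n}$ is coprime to $m$ is a small but welcome addition that the paper leaves implicit.
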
 
\begin{proof}
By Theorem \ref{main}, we have
$$
B_{n}(q)\sum_{k=0}^{n-1}
\frac{(q^{r};q^{m})_k}{(q^{m};q^{m})_k}\cdot\nu_k(q)=A_{r,m,n}(q)C_{m,n}(q)\cdot H(q),
$$
where $H(q)$ is a polynomial in $q$. Notice that the greatest common divisor of all coefficients of $A_{r,m,n}(q)C_{m,n}(q)$ is just $1$. According to a well-known result of Gauss, we know that the coefficients of $H(q)$ must be all integers. 
Hence substituting $q=1$ in (\ref{qrqmqmqmnuk}), we get
$$
B_{n}(1)\sum_{k=0}^{n-1}
(-1)^k\binom{-\frac rm}{k}\cdot\nu_k(1)=N_{\frac rm,n}\cdot H(1)\equiv 0\pmod{N_{\frac rm,n}}.
$$
Since $N_{\frac rm,n}$ is prime to $B_n(1)$, (\ref{binomrmknuk}) is concluded.
\end{proof}

\section{Proof of Theorem \ref{main}}
\setcounter{lemma}{0}
\setcounter{theorem}{0}
\setcounter{corollary}{0}
\setcounter{remark}{0}
\setcounter{equation}{0}
\setcounter{conjecture}{0}

In this section, we shall complete the proof of Theorem \ref{main}. First, we need several auxiliary lemmas.
\begin{lemma}\label{qrmdphid} Let $r\in\Z$ and $m,d\in\Z^+$ with $(m,d)=1$. Then
\begin{equation}\label{ch4}
\frac{(q^r;q^m)_d}{1-q^d}\equiv r+\lambda_{r,m}(d)m\pmod {\Phi_d(q)},
\end{equation}
where $\lambda_{r,m}$ is the one defined by (\ref{lambdarmd}).
\end{lemma}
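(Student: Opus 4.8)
The plan is to expand the $q$-shifted factorial as a product and to isolate the unique factor responsible for the divisibility by $\Phi_d(q)$. Writing
$$
(q^r;q^m)_d=\prod_{j=0}^{d-1}\bigl(1-q^{r+mj}\bigr),
$$
I first note that, since $(m,d)=1$, the assignment $j\mapsto (r+mj)\bmod d$ is a permutation of $\{0,1,\dots,d-1\}$; in particular there is exactly one index $j$ with $d\mid r+mj$, namely $j=\lambda_{r,m}(d)$ by the defining congruence (\ref{lambdarmd}). That is precisely the factor which vanishes at a primitive $d$-th root of unity. Accordingly I would split
$$
\frac{(q^r;q^m)_d}{1-q^d}=\frac{1-q^{r+\lambda_{r,m}(d)m}}{1-q^d}\cdot\prod_{\substack{0\le j\le d-1\\ j\ne\lambda_{r,m}(d)}}\bigl(1-q^{r+mj}\bigr),
$$
and evaluate the two factors modulo $\Phi_d(q)$ separately.

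For the first factor, set $t=(r+\lambda_{r,m}(d)m)/d$, which is an integer by (\ref{lambdarmd}). Then this factor equals $[dt]_q/[d]_q$, so formula (\ref{nmqnq}) (which remains valid for negative $t$) yields
$$
\frac{1-q^{dt}}{1-q^d}\equiv t=\frac{r+\lambda_{r,m}(d)m}{d}\pmod{[d]_q},
$$
hence also modulo $\Phi_d(q)$, since $\Phi_d(q)$ divides $[d]_q$.

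For the second factor I would specialize at an arbitrary root $\zeta$ of $\Phi_d(q)$, i.e.\ a primitive $d$-th root of unity. Deleting the index $\lambda_{r,m}(d)$ removes exactly the residue $0$ from the permutation above, so the surviving exponents $r+mj$ run through all the nonzero residues modulo $d$; consequently
$$
\prod_{\substack{0\le j\le d-1\\ j\ne\lambda_{r,m}(d)}}\bigl(1-\zeta^{r+mj}\bigr)=\prod_{i=1}^{d-1}\bigl(1-\zeta^{i}\bigr)=d,
$$
the last step being the specialization of $(x^d-1)/(x-1)$ at $x=1$. As this holds for every root of $\Phi_d(q)$, the product is congruent to $d$ modulo $\Phi_d(q)$. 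Combining the two reductions gives $(r+\lambda_{r,m}(d)m)/d\cdot d=r+\lambda_{r,m}(d)m$, which is the claim.

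All the individual steps are short; the one place that needs care is the combinatorial bookkeeping of the permutation $j\mapsto(r+mj)\bmod d$, which must be used twice at once: to single out $\lambda_{r,m}(d)$ as the one vanishing index, and to match the remaining $d-1$ factors with the complete set of nontrivial $d$-th roots of unity. I would also remark that $q$ is a unit modulo $\Phi_d(q)$ for $d\ge 2$, so possibly negative exponents $r+mj$ pose no problem, and the case $d=1$ (empty surviving product) is immediate.
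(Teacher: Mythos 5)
Your proposal is correct and follows essentially the same route as the paper: isolate the unique factor $1-q^{r+\lambda_{r,m}(d)m}$, reduce $(1-q^{dt})/(1-q^d)$ to $t=(r+\lambda_{r,m}(d)m)/d$ via (\ref{nmqnq}), and evaluate the remaining product at primitive $d$-th roots of unity to get $d$. The only cosmetic difference is that the paper first replaces the surviving factors by $(q;q)_{d-1}$ modulo $\Phi_d(q)$ before specializing at a root, whereas you specialize the product directly; both are the same computation.
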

\begin{proof} Clearly
\begin{align*}
\frac{(q^r;q^m)_d}{1-q^d}
=&\frac{1-q^{r+\lambda_{r,m}(d)m}}{1-q^d}\prod_{\substack{0\leq j\leq d-1\\ r+jm\not\equiv0\pmod{d}}}(1-q^{r+jm})\\
\equiv&\frac{1-q^{d\cdot\frac{r+\lambda_{r,m}(d)m}{d}}}{1-q^d}\prod_{j=1}^{d-1}(1-q^j)\equiv \frac{r+\lambda_{r,m}(d)m}{d}\cdot(q;q)_{d-1}\pmod {\Phi_d(q)}.
\end{align*}
Now for every primitive $d$-th root of unity $\xi$, we have
$$
(q;q)_{d-1}\big|_{q=\xi}=\prod_{j=1}^{d-1}(1-\xi^j)=\lim_{x\to1}\prod_{j=1}^{d-1}(x-\xi^j)=\lim_{x\to1}\frac{x^d-1}{x-1}=d.
$$
So
$$
(q;q)_{d-1}\equiv d\pmod {\Phi_d(q)}.
$$
\end{proof}
\begin{lemma}\label{qLucasPhid} Under the assumptions of Lemma \ref{qrmdphid}, for any $s,t\in\N$ with $0\leq t\leq d-1$,
\begin{align}\label{qrqmnqmqmne}
\frac{(q^r;q^m)_{sd+t}}{(q^{m};q^m)_{sd+t}}=\frac{(\frac{r+\lambda_{r,m}(d)m}{md})_s}{(1)_s}\cdot\frac{(q^r;q^m)_t}{(q^{m};q^m)_t}\pmod {\Phi_d(q)}.
\end{align}
\end{lemma}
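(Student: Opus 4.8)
The plan is to partition the index ranges into $s$ complete blocks of length $d$ plus one residual block of length $t$, and then compare the numerator and denominator products block by block modulo $\Phi_d(q)$. Writing
$$(q^r;q^m)_{sd+t}=\prod_{j=0}^{sd+t-1}(1-q^{r+jm}),\qquad (q^m;q^m)_{sd+t}=\prod_{l=1}^{sd+t}(1-q^{ml}),$$
I would group the numerator over the blocks $\{ad,\dots,ad+d-1\}$ ($0\le a\le s-1$) together with the tail $sd\le j\le sd+t-1$, and the denominator over the blocks $\{ad+1,\dots,ad+d\}$ together with the tail $sd+1\le l\le sd+t$. Because $(m,d)=1$, each complete numerator block contains exactly one exponent with $r+jm\equiv0\pmod d$, namely $j=ad+\lambda_{r,m}(d)$, and each complete denominator block exactly one with $d\mid ml$, namely $l=(a+1)d$. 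These are the factors divisible by $\Phi_d(q)$ (their $q$-powers are powers of $q^d\equiv1$); I will call them the vanishing factors and treat them separately from the remaining $d-1$ unit factors of each block.

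For the unit factors, reducing modulo $\Phi_d(q)$ lets me replace $q^{r+(ad+i)m}$ by $q^{r+im}$ and $q^{m(ad+i)}$ by $q^{mi}$, since $q^{adm}=(q^d)^{am}\equiv1$. Each complete numerator block then collapses to $\prod_{i\ne\lambda_{r,m}(d)}(1-q^{r+im})$ and each complete denominator block to $\prod_{i=1}^{d-1}(1-q^{mi})$; as the exponents range over all nonzero residues mod $d$ in both cases, both products reduce to $(q;q)_{d-1}$, which is $\equiv d\not\equiv0\pmod{\Phi_d(q)}$ by the computation in Lemma \ref{qrmdphid} and hence invertible. Thus the unit parts of the $s$ complete blocks cancel in the quotient, while the same substitution turns the residual blocks into exactly $(q^r;q^m)_t/(q^m;q^m)_t$ modulo $\Phi_d(q)$.

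To handle the vanishing factors I would pair, for each $a$, the numerator factor $1-q^{r+(ad+\lambda_{r,m}(d))m}=1-q^{d(e+am)}$, where $e=(r+\lambda_{r,m}(d)m)/d\in\Z$, with the denominator factor $1-q^{dm(a+1)}$, and apply the congruence $\frac{1-q^{dA}}{1-q^{dB}}\equiv\frac AB\pmod{\Phi_d(q)}$, which is immediate from (\ref{nmqnq}) because $[dA]_q/[d]_q\equiv A$ while $[dB]_q/[d]_q\equiv B$ is a nonzero integer, hence invertible modulo $\Phi_d(q)$. The resulting product $\prod_{a=0}^{s-1}\frac{e+am}{m(a+1)}$ becomes, after pulling $m$ out of each factor, exactly $\bigl(\tfrac em\bigr)_s/(1)_s$ with $\tfrac em=\frac{r+\lambda_{r,m}(d)m}{md}$, which is the claimed coefficient.

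The delicate point, and the step I expect to be the main obstacle, is that neither the full numerator nor the full denominator is invertible modulo $\Phi_d(q)$, each carrying several factors divisible by $\Phi_d(q)$, so one cannot reduce them separately and then divide. The argument must instead be arranged as a product of individually $\Phi_d$-integral rational functions, namely the paired ratios $\frac{1-q^{dA}}{1-q^{dB}}$ and the unit-block quotients, each with an invertible denominator mod $\Phi_d(q)$. I would also verify that the count of vanishing factors is balanced: the residual denominator block contributes none, since $l$ only reaches $sd+t\le sd+d-1$, whereas the residual numerator block contributes one precisely when $\lambda_{r,m}(d)\le t-1$; in that case both sides of the asserted congruence vanish, the left through this extra factor and the right through the $i=\lambda_{r,m}(d)$ factor of $(q^r;q^m)_t$, so the identity persists. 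Establishing the $\Phi_d$-integrality of the whole quotient and this consistency of zero counts is the only genuinely subtle part; the rest is the block-wise reduction described above.
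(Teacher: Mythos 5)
Your proof is correct and follows essentially the same route as the paper: the paper also decomposes into blocks of length $d$, and its Lemma \ref{qrmdphid} is precisely your separation of the single $\Phi_d(q)$-divisible factor from the $d-1$ unit factors (which reduce to $(q;q)_{d-1}\equiv d$), applied to each shifted block via $(q^{r+jmd};q^m)_d/(1-q^d)\equiv r+\lambda_{r,m}(d)m+jmd \pmod{\Phi_d(q)}$. The only cosmetic difference is that the paper normalizes both numerator and denominator by $(1-q^d)^s$ and reduces each to an integer multiple of $(q^{r};q^m)_t$ resp.\ $(q^{m};q^m)_t$ before dividing, rather than pairing the vanishing factors one by one as you do.
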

\begin{proof} By Lemma \ref{qrmdphid}, we have
\begin{align*}
\frac{(q^r;q^m)_{sd+t}}{(1-q^d)^s}=&(q^{r+smd};q^m)_t\prod_{j=0}^{s-1}(q^{r+jmd};q^m)_d\\
\equiv&
(q^{r};q^m)_t\prod_{j=0}^{s-1}(r+\lambda_{r,m}(d)m+jmd)\pmod{\Phi_d(q)}.
\end{align*}
Similarly,
$$
\frac{(q^{m};q^m)_{sd+t}}{(1-q^d)^s}\equiv
(q^{m};q^m)_t\prod_{j=0}^{s-1}(m+(d-1)m+jmd)\pmod{\Phi_d(q)}.
$$
Clearly
$$
\prod_{j=0}^{s-1}\frac{r+\lambda_{r,m}(d)m+jmd}{md+jmd}=\frac{(\frac{r+\lambda_{r,m}(d)m}{md})_s}{(1)_s}.
$$
Thus we get (\ref{qrqmnqmqmne}), since $(q^{m};q^m)_t$ is prime to $\Phi_d(q)$ for each $0\leq t\leq d-1$.
\end{proof}

Let $\lfloor\cdot\rfloor$ denote the floor function, i.e., $\lfloor x\rfloor=\max\{k\in\N:\, k\leq x\}$ for every $x\in\R$.
\begin{lemma}\label{qrqmnqmqmnAB} Suppose that $r\in\Z$, $m\in\N$ and $(r,m)=1$. Then
\begin{align}\label{qrqmnqmqmnABe}
\frac{(q^r;q^m)_{n}}{(q^m;q^m)_{n}}\prod_{(d,m)>1}\Phi_d(q)^{\lfloor\frac{n(d,m)}{d}\rfloor}=(-1)^\delta q^{\Delta}\prod_{d\in\cS_{r,m}(n)}\Phi_d(q),
\end{align}
where 
$
\delta=|\{0\leq j\leq n-1:\,r+jm<0\}|
$
and
$$
\Delta=\sum_{\substack{0\leq j\leq n-1\\ r+jm<0}}(r+jm).
$$
\end{lemma}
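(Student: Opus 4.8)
The plan is to compare the two sides factor-by-factor in their cyclotomic decomposition, since up to a single monomial both are products of cyclotomic polynomials $\Phi_d(q)$ with $d\geq 2$. First I would strip off the monomial: whenever $r+jm<0$ the factor $1-q^{r+jm}$ equals $-q^{r+jm}(1-q^{-(r+jm)})$, so pulling these out of $(q^r;q^m)_n=\prod_{j=0}^{n-1}(1-q^{r+jm})$ produces exactly the global prefactor $(-1)^\delta q^{\Delta}$ and replaces each negative exponent $r+jm$ by its absolute value (here $(r,m)=1$ and $m\geq 2$ guarantee every $r+jm\neq0$, so no factor vanishes). After this normalization each surviving factor $1-q^{|r+jm|}$ and each factor $1-q^{jm}$ of $(q^m;q^m)_n$ has the shape $1-q^{k}$ with $k\geq 1$, hence is a genuine product of cyclotomic polynomials with value $1$ at $q=0$. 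The remaining task is then a pure multiplicity count: to show that for every $d\geq 2$ the order of $\Phi_d(q)$ on the left equals $1$ when $d\in\cS_{r,m}(n)$ and $0$ otherwise.

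For the count I would use that $\Phi_d(q)\mid 1-q^{a}$ if and only if $d\mid a$, and that each such factor is squarefree, so the multiplicity of $\Phi_d$ is merely a count of indices. In $(q^m;q^m)_n$ the multiplicity of $\Phi_d$ is $\#\{1\leq j\leq n:\ d\mid jm\}=\lfloor n(d,m)/d\rfloor$, while in $(q^r;q^m)_n$ it is $c_d:=\#\{0\leq j\leq n-1:\ d\mid r+jm\}$. The hypothesis $(r,m)=1$ is what makes the two cases split cleanly: if $(d,m)>1$ then $d\mid r+jm$ would force $(d,m)\mid r$, which is impossible, so $c_d=0$ and the explicitly inserted factor $\Phi_d(q)^{\lfloor n(d,m)/d\rfloor}$ cancels the denominator's contribution exactly, yielding net order $0$. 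If $(d,m)=1$, then $d\mid r+jm$ holds for the single residue class $j\equiv\lambda_{r,m}(d)\pmod d$, so the net order of $\Phi_d$ equals $c_d-\lfloor n/d\rfloor$.

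The heart of the argument is then the elementary floor estimate. Writing $\lambda=\lambda_{r,m}(d)\in\{0,\dots,d-1\}$, when $\lambda\leq n-1$ one has $c_d=\lfloor(n-1-\lambda)/d\rfloor+1$, and from $0\leq\lambda\leq d-1$ one gets $\lfloor n/d\rfloor-1\leq\lfloor(n-1-\lambda)/d\rfloor\leq\lfloor n/d\rfloor$; hence $c_d-\lfloor n/d\rfloor\in\{0,1\}$, and it equals $1$ precisely when $\lfloor(n-1-\lambda)/d\rfloor=\lfloor n/d\rfloor$, i.e.\ precisely when $d\in\cS_{r,m}(n)$. When $\lambda>n-1$ one checks $n<d$, so $c_d=0=\lfloor n/d\rfloor$ and $d\notin\cS_{r,m}(n)$, which again matches. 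I expect this floor bookkeeping—keeping the two regimes $\lambda\leq n-1$ and $\lambda>n-1$ straight and tying the value $1$ exactly to the defining condition of $\cS_{r,m}(n)$—to be the main obstacle, the rest being formal.

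Finally I would assemble the pieces: the computation shows the normalized left-hand side and $\prod_{d\in\cS_{r,m}(n)}\Phi_d(q)$ are polynomials with the same multiset of roots (equal $\Phi_d$-multiplicities for all $d\geq 2$, with the $n$ factors of $1-q$ in numerator and denominator cancelling so that no $\Phi_1$ survives). Evaluating at $q=0$, where $\Phi_d(0)=1$ for every $d\geq 2$ and each normalized factor has constant term $1$, pins the overall constant to $1$, so the two polynomials coincide and (\ref{qrqmnqmqmnABe}) follows after reinstating the prefactor $(-1)^\delta q^{\Delta}$.
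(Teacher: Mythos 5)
Your proposal is correct and follows essentially the same route as the paper: factor each $1-q^{|r+jm|}$ and $1-q^{jm}$ into cyclotomic polynomials, extract the prefactor $(-1)^\delta q^\Delta$ from the negative exponents, and match $\Phi_d$-multiplicities via the counts $\#\{0\leq j\leq n-1: d\mid r+jm\}=1+\lfloor(n-1-\lambda_{r,m}(d))/d\rfloor$ and $\#\{1\leq j\leq n: d\mid jm\}=\lfloor n(d,m)/d\rfloor$, with the defining condition of $\cS_{r,m}(n)$ recording exactly when the net multiplicity is $1$. Your extra care with the $\lambda>n-1$ regime, the cancellation of the $(1-q)^n$ factors, and pinning the constant at $q=0$ only fleshes out steps the paper leaves implicit.
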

\begin{proof} Note that for any $h\in\N$
$$
1-q^h=\prod_{d\mid h}\Phi_d(q).
$$
So
$$
(q^r;q^m)_n=(-1)^\delta q^{\Delta}\prod_{(d,m)=1}\Phi_d(q)^{|\{0\leq j\leq n-1:\,r+jm\equiv 0\pmod{d}\}|}.
$$
It is easy to check that
$$
|\{0\leq j\leq n-1:\,r+jm\equiv 0\pmod{d}\}|=1+\bigg\lfloor\frac{n-1-\lambda_{r,m}(d)}{d}\bigg\rfloor.
$$
Similarly,
$$
(q^m;q^m)_n=\prod_{d\geq 1}\Phi_d(q)^{|\{1\leq j\leq n:\,jm\equiv 0\pmod{d}\}|},
$$
and
$$
|\{1\leq j\leq n:\,jm\equiv 0\pmod{d}\}|=\bigg\lfloor\frac{n(m,d)}{d}\bigg\rfloor.
$$
Hence $d\in \cS_{r,m}(n)$ if and only if $(d,m)=1$ and
$$
|\{0\leq j\leq n-1:\,r+jm\equiv 0\pmod{d}\}|=|\{1\leq j\leq n:\,jm\equiv 0\pmod{d}\}|+1.
$$
We immediately get (\ref{qrqmnqmqmnABe}).
\end{proof}
Let
\begin{equation}\label{Brmnq}
B_{r,m,n}(q)=\prod_{\substack{l\mid n,\,l\geq 2\\ (d,m)=l}}\Phi_d(q)^{\lfloor\frac{nl}{d}\rfloor},
\end{equation}
Then (\ref{qrqmnqmqmnABe}) is equivalent to
$$
\frac{(q^r;q^m)_{n}}{(q^m;q^m)_{n}}=(-1)^\delta q^{\Delta}\cdot\frac{A_{r,m,n}(q)}{B_{r,m,n}(q)}.
$$
According to the definitions, clearly $B_{r,m,n}(q)$ is prime to $A_{r,m,n}(q)C_{m,n}(q)$. Also, $A_{r,m,n}(1)C_{m,n}(1)$ and $B_{r,m,n}(1)$ are co-prime integers. Moreover, $B_{r,m,n}(q)$ is divisible by 
$$
\frac{[n]_q}{C_{m,n}(q)}=\prod_{\substack{d\mid n\\ (d,m)>1}}\Phi_d(q).
$$
So we must have $A_{r,m,n}(1)C_{m,n}(1)$ coincides with the  numerator of $n\cdot\big|\binom{-\frac rm}{n}\big|$, i.e., (\ref{Armn1Cmn1}) is valid.

Now we are ready to prove Theorem \ref{main}.
\begin{proof}[Proof of Theorem \ref{main}]
It suffices to show that the left side of (\ref{qrqmqmqmnuk}) is divisible by $\Phi_d(q)$ for those $d\in\cS_{r,m}(n)$ and $d\mid n$ with $(m,d)=1$.

Suppose that $d\in\cS_{r,m}(n)$. Write $n=ud+v$ where $0\leq v\leq d-1$. 
Let
$$
h=\lambda_{r,m}(d),\qquad
w=\frac{r+\lambda_{r,m}(d)m}{d}.
$$
Note that
$d\in\cS_{r,m}(n)$ implies that $v\geq 1+h$. Hence for any $v\leq t\leq d-1$, we have
$$
(q^{r};q^m)_t=(1-q^{r+hm})\prod_{\substack{0\leq j\leq t-1\\ j\neq h}}(1-q^{r+jm})\equiv 0\pmod{\Phi_d(q)}.
$$
In view of (\ref{qrqmnqmqmne}), 
$$
\frac{(q^{r};q^{m})_{ud+t}}{(q^{m};q^{m})_{ud+t}}\equiv 0\pmod{\Phi_d(q)}.
$$
Thus applying Lemma \ref{qLucasPhid}, we get 
\begin{align}
\sum_{k=0}^{n-1}\frac{(q^{r};q^{m})_k}{(q^{m};q^{m})_k}\cdot\nu_k(q)\equiv&\sum_{k=0}^{ud+d-1}\frac{(q^{r};q^{m})_k}{(q^{m};q^{m})_k}\cdot\nu_k(q)\equiv\sum_{s=0}^{u}\sum_{t=0}^{d-1}\frac{(q^{r};q^{m})_{sd+t}}{(q^{m};q^{m})_{sd+t}}\cdot\nu_{sd+t}(q)\notag\\
\equiv&\sum_{s=0}^{u}\frac{(\frac{w}{m})_s}{(1)_s}\cdot\mu_s(q)\sum_{t=0}^{d-1}\frac{(q^{r};q^{m})_{t}}{(q^{m};q^{m})_{t}}\cdot\nu_{t}(q)\equiv 0\pmod {\Phi_d(q)}.
\end{align}

Furthermore, assume that $d\mid n$ and $(m,d)=1$. Let $u=n/d$. Then in view of (\ref{qrqmnqmqmne}), we also have
\begin{align}
\sum_{k=0}^{n-1}\frac{(q^{r};q^{m})_k}{(q^{m};q^{m})_k}\cdot\nu_k(q)\equiv\sum_{s=0}^{u-1}\frac{(\frac{w}{m})_s}{(1)_s}\cdot\mu_s(q)\sum_{t=0}^{d-1}\frac{(q^{r};q^{m})_t}{(q^{m};q^{m})_t}\cdot\nu_t(q)\equiv0
\pmod {\Phi_d(q)}.
\end{align}
\end{proof}

\section{Proofs of Theorem \ref{binomsumrho} and Corollary \ref{2kkrho}}
\setcounter{lemma}{0}
\setcounter{theorem}{0}
\setcounter{corollary}{0}
\setcounter{remark}{0}
\setcounter{equation}{0}
\setcounter{conjecture}{0}

\begin{proof}[Proof Theorem \ref{binomsumrho}]
Write $\alpha=r/m$, where $r\in\Z$, $m\in\Z^+$ and $(r,m)=1$.
Assume that $d\geq 1$ and $(m,d)=1$.
Let $h=\lambda_{r,m}(d)$. Clearly $r\equiv -hm\pmod{d}$. Then
$$
\frac{(q^r;q^m)_k}{(q^m;q^m)_k}\equiv \frac{(q^{-hm};q^m)_k}{(q^m;q^m)_k}=(-1)^kq^{m\binom{k}{2}-mhk}\qbinom{h}{k}{q^{m}}\pmod{\Phi_d(q)}.
$$
Note that
\begin{align*}
\sum_{k=0}^{d-1}q^{mk}[2mk-hm]_q\cdot\qbinom{h}{k}{q^{m}}^\rho
=&\sum_{k=0}^{h}q^{m(h-k)}[2m(h-k)-hm]_q\cdot\qbinom{h}{k}{q^{m}}^\rho\\
=&-\sum_{k=0}^{h}q^{mk}[2mk-hm]_q\cdot\qbinom{h}{k}{q^{m}}^\rho.
\end{align*}
We must have 
\begin{align}
&\sum_{k=0}^{d-1}q^{mk}[2mk+r]_q\cdot(-1)^{\rho k}q^{\rho(mhk-m\binom{k}{2})}\cdot\frac{(q^r;q^m)_k^\rho}{(q^m;q^m)_k^\rho}\notag\\
\equiv&\sum_{k=0}^{d-1}q^{mk}[2mk-hm]_q\cdot\qbinom{h}{k}{q^{m}}^\rho= 0\pmod{\Phi_d(q)}.
\end{align}
Thus the requirement (iii) of Theorem \ref{main} is satisfied.

We still need to verify the requirement (ii) of Theorem \ref{main}. By Lemma \ref{qLucasPhid}, for each $s,t\in\N$ with $0\leq t\leq d-1$,
\begin{align*}
&q^{m(sd+t)}[2m(sd+t)+r]_q\cdot\frac{(q^r;q^m)_{sd+t}^{\rho-1}}{(q^m;q^m)_{sd+t}^{\rho-1}}\\
\equiv&
\frac{(\frac{r+\lambda_{r,m}(d)m}{md})_s^{\rho-1}}{(1)_s^{\rho-1}}\cdot q^{mt}[2mt+r]_q\cdot\frac{(q^r;q^m)_{t}^{\rho-1}}{(q^m;q^m)_{t}^{\rho-1}}\pmod{\Phi_d(q)}.
\end{align*}
And
\begin{align*}
(-1)^{sd+t}q^{mh(sd+t)-m\binom{sd+t}{2}}=&(-1)^{sd+t}q^{mh(sd+t)-msdt-m\binom{sd}{2}- m\binom{t}{2}}\\
\equiv&(-1)^{sd}q^{-m\binom{sd}{2}}\cdot(-1)^tq^{mht-m\binom{t}{2}}\pmod{\Phi_d(q)}.
\end{align*}
If $d$ is odd, then clearly $$(-1)^{sd}q^{-m\binom{sd}{2}}=(-1)^sq^{-md\cdot\frac{s(sd-1}{2}}\equiv (-1)^s\pmod{\Phi_d(q)}.$$
If $d$ is even, then $$
1+q^{\frac d2}=\frac{1-q^d}{1-q^{\frac d2}}\equiv 0\pmod{\Phi_d(q)},
$$
i.e., $q^{\frac d2}\equiv -1\pmod{\Phi_d(q)}$. So
$$
(-1)^{sd}q^{-m\binom{sd}{2}}=(q^{\frac d2})^{-ms(sd-1)}\equiv (-1)^s\pmod{\Phi_d(q)},
$$
by noting that $m$ is odd since $(m,d)=1$. That is, we always have
$$
(-1)^{sd+t}q^{mh(sd+t)-m\binom{sd+t}{2}}\equiv (-1)^{s}\cdot(-1)^tq^{mht-m\binom{t}{2}}\pmod{\Phi_d(q)}.
$$

Thus applying Theorem \ref{main}, we obtain that
\begin{equation}\label{}
\sum_{k=0}^{n-1}q^{mk}[2mk+r]_q\cdot(-1)^{\rho k}q^{\rho(mhk-m\binom{k}{2})}\cdot\frac{(q^r;q^m)_k^\rho}{(q^m;q^m)_k^\rho}\equiv0\pmod{A_{r,m,n}(q)C_{m,n}(q)}.
\end{equation}

On the other hand, clearly $B_{r,m,n}(q)$ is divisible by $B_{r,m,k}(q)$ provided $0\leq k\leq n-1$. 
It follows from Lemma \ref{qrqmnqmqmnAB} that
$$
B_{r,m,n}(q)^\rho\sum_{k=0}^nq^{mk}[2mk+r]_q\cdot\frac{(q^r;q^m)_k^\rho}{(q^m;q^m)_k^\rho}
$$
is a polynomial with integral coefficients.
And by (\ref{Phid1}), each prime factor of $B_{r,m,n}(1)$ must divide $m$. 
In view of Corollary \ref{mainq1}, we have
$$
\sum_{k=0}^{n-1}(2mk+r)\cdot\binom{-\frac rm}{k}^\rho\equiv0\pmod{N_{\frac rm,n}}.
$$ 
So (\ref{binomsumr}) is valid since $N_{\frac rm,n}$ and $m$ are co-prime.
\end{proof}
\begin{proof}[Proof of Corollary \ref{2kkrho}]
As we have mentioned, $N_{\frac12,n}$ coincides with the odd part of $n\binom{2n}{n}$. So by substituting $\alpha=1/2$ in Theorem \ref{binomsumrho}, we only need to compute the $2$-adic of the left side of (\ref{2kkrhon2nn}). For a positive integer $a$, let $\ord_2(a)$ denote the $2$-adic order of $a$, i.e., $2^{\ord_2(a)}\mid a$ but $2^{\ord_2(a)+1}\nmid a$. For each $0\leq k\leq n-1$, since
$$
n\binom{2n}{n}=\binom{2k}{k}\cdot\frac{2^{n-k}\cdot(2n-1)(2n-3)\cdots(2k+1)}{(n-1)(n-2)\cdots(k+1)},
$$
we have
$$
\ord_2\bigg(n\binom{2n}{n}\bigg)\leq n-k+\ord_2\bigg(\binom{2k}{k}\bigg).
$$
Also, $\binom{2k}{k}$ is even for each $k\geq 1$, since
$$
\binom{2k}{k}+2\sum_{j=0}^{k-1}\binom{2k}{j}=2^{2k}.
$$
Hence for each $0\leq k\leq n-1$,
\begin{align*}
\ord_2\bigg(\binom{2k}{k}^\rho\cdot4^{\rho(n-1-k)}\bigg)\geq&(\rho-1)+2(n-1-k)+\ord_2\bigg(\binom{2k}{k}\bigg)\\
\geq&(\rho-2)+\ord_2\bigg(n\binom{2n}{n}\bigg).
\end{align*}
\end{proof}

\end{document}